\pgfplotsset{
  width=.65\linewidth,
  axis background/.style={fill=black!5!white},
  grid style={densely dotted,semithick},
  legend style={
    legend columns=1,
    legend pos=outer north east
  },
  compat=1.16,
}
\tikzset{
  cross/.pic = {
    \draw[rotate = 45] (-#1,0) -- (#1,0);
    \draw[rotate = 45] (0,-#1) -- (0, #1);
  }
}
\providecommand{\ds}{\, \mathrm{d}s}
\providecommand{\du}{\, \mathrm{d}u}
\providecommand{\dx}{\, \mathrm{d}x}
\providecommand{\tria}{\mathcal{T}}
\providecommand{\CR}{\textup{CR}^1(\mathcal{T})}
\providecommand{\CRo}{\textup{CR}_0^1(\mathcal{T})}
\begin{document} 
	
\title[Guaranteed bounds and Ka{\v c}anov schemes via discrete duality]{Guaranteed upper bounds for iteration errors and modified Ka{\v c}anov schemes via discrete duality}

\author[L.\ Diening]{Lars Diening}
\author[J.\ Storn]{Johannes Storn}

\address[L.\ Diening]{Department of Mathematics, Bielefeld University, Postfach 10 01 31, 33501 Bielefeld, Germany}
\email{lars.diening@uni-bielefeld.de}
\address[J.\ Storn]{Faculty of Mathematics \& Computer Science, Institute of Mathematics, Leipzig University, Augustusplatz 10, 04109 Leipzig, Germany}
\email{johannes.storn@uni-leipzig.de}

\keywords{discrete duality, iteration error, computational calculus of variations, \Kacanov{} scheme}

\subjclass[2020]{49M29, 
35J70, 65N22, 65N30}


\begin{abstract}
We apply duality theory to discretized convex minimization problems to obtain computable guaranteed upper bounds for the distance of given discrete functions and the exact discrete minimizer. Furthermore, we show that the discrete duality framework extends convergence results for the \Kacanov{} scheme to a broader class of problems.
\end{abstract}

\thanks{This work was funded by the Deutsche Forschungsgemeinschaft (DFG, German Research Foundation) – SFB 1283/2 2021 – 317210226.}

\maketitle


\section{Introduction}
\label{sec:introduction}
A wide class of non-linear partial differential equations arises from the minimization of convex energies $\mathcal{J}$ over some space $V$.
More precisely, given a convex integrand $\phi\colon [0,\infty) \to \mathbb{R}$, a bounded Lipschitz domain $\Omega\subset \mathbb{R}^d$, and a right-hand side $f\in V^*$, the goal is to find the minimizer
\begin{equation}\label{eq:Energy}
u = \argmin_{v\in V}\mathcal{J}(v)
\quad \text{with}\quad \mathcal{J}(v) \coloneqq \int_\Omega \phi(|\nabla v|) \dx - \int_\Omega fv\dx,
\end{equation}
where we replace the integral $\int_\Omega fv\dx$ by the duality pairing $\langle f,v\rangle_{V^*,V}$ for rough functionals $f\in V^*$.
Typical examples include shape-optimization problems or non-Newtonian fluid models, where the gradient is replaced by the symmetric gradient. An equivalent characterization of the minimizer seeks the solution $u\in V$ to the variational problem
\begin{equation}\label{eq:VarProb}
\int_\Omega \frac{\phi'(|\nabla u|)}{|\nabla u|} \nabla u \cdot \nabla v\dx = \int_\Omega f v\dx \qquad\text{for all }v\in V.
\end{equation}
To approximate the solution to these problems, the finite element method discretizes the space $V$ by some finite-dimensional space $V_h$. However, in general the discretized problems remain non-linear, which requires an iterative scheme such as Newton's method or a fixed-point iteration.
This paper addresses two key challenges arising from such computations:
\begin{itemize}
\item We discuss the computation of a guaranteed upper bound for the iteration error. Specifically, given a function $u_{h,n} \in V_h$, we derive a computable bound $\textup{GUB} < \infty$ such that
\begin{equation}\label{eq:IterationError}
\mathcal{J}(u_{h,n}) - \min_{v_h \in V_h} \mathcal{J}(v_h) \leq \textup{GUB}.
\end{equation}
\item We introduce a modified version of the \Kacanov{} iteration, an iterative scheme for computing the minimizer, which ensures convergence for problems where the classical \Kacanov{} scheme fails.
\end{itemize}
We achieve both results by an application of duality theory on the discrete level, an idea that was introduced in \cite{BalciDieningStorn23} for the $p$-Dirichlet energy but extends to a much larger class of problems.
This idea results in a dual minimization problem, where the dual energy is minimized over a constrained space $\Sigma$. Unlike the continuous case, the constraint is tested exclusively with functions in $V_h$, which allows us to compute functions in $\Sigma$ without requiring additional properties of the right-hand side $f$. 
Suitable dual functions can be obtained through post-processing. Alternatively, they emerge as a byproduct of the iterative scheme such as gradient descent, Newton, or \Kacanov{} schemes, leading to a built-in error estimator for the iteration error \eqref{eq:IterationError}.

Instead of solving the discretized primal problem, it is 
possible to solve the corresponding discrete dual problem. 
Since the convergence of iterative schemes often depends on specific properties of the integrand, the dual problem may provide advantages, allowing for example to obtain a convergent iterative scheme. This approach is particularly advantageous for the \Kacanov{} scheme, as for low-order discretizations, the dual iterative scheme simplifies to a linearization of the primal problem.

Our theoretical results are supported by a series of numerical experiments on the $p$-Laplace problem, the $p$-Stokes problem, a shape-optimization problem, and a Bingham fluid.

The remainder of this paper is organized as follows. Section~\ref{sec:Duality} introduces the duality theory. We apply this theory to discretized problems in Section~\ref{sec:DiscDuality}. In Section~\ref{sec:GUB}, we use the discrete duality result to obtain the guaranteed upper bound in \eqref{eq:IterationError} for the iteration error. Section~\ref{sec:dualKacanov} introduces the dual \Kacanov{} iteration. Our numerical experiments are displayed in Section~\ref{sec:numExp}.
\begin{remark}[Previous works]\label{rem:Han}
As noted by one referee, discrete duality was already employed in \cite[Sec.~5.4]{Han05} to derive guaranteed upper bounds for the iteration error in \Kacanov{} schemes, cf.~Section~\ref{sec:GUB} below. Another noteworthy application of discrete duality in \cite{Han05} is the comparison between the solution $u \in V$ to \eqref{eq:VarProb} and the solution $u_0 \in H^1_0(\Omega)$ of the linearized problem $-\Delta u_0 = f$, in the case of integrands satisfying $\phi'(t)/t \approx 1$ for all $t \geq 0$; see also \cite{Han94}.
\end{remark}
\section{Duality}\label{sec:Duality}
In this section we briefly discuss the well-known duality theory, see for example \cite{Zeidler90}. 
For the sake of presentation we consider energies defined as in \eqref{eq:Energy} with integrand $\phi$ being an N-function defined below. The arguments extend, however, to a larger class of problems.
\begin{definition}[Nice functions]\label{def:NfunctionIntro}
A function $\varphi\colon \mathbb{R}_{\geq 0} \to \mathbb{R}_{\geq 0}$ is an N-function if
\begin{enumerate}
\item $\varphi$ is continuous and convex,
\item there is a right-continuous and non-decreasing function $\varphi'\colon \mathbb{R}_{\geq 0} \to \mathbb{R}_{\geq 0}$ with $\varphi(t) = \int_0^t \varphi'(s)\ds$ that satisfies $\lim_{t\to \infty} \varphi'(t) = \infty$ as well as $\varphi'(0) = 0$ and $\varphi'(t) >0$ for all $t>0$.
\end{enumerate}
\end{definition}
The convex conjugate $\phi^*$ of the N-function $\phi$ reads for all $r\geq 0$
\begin{equation*}
\phi^*(r) \coloneqq \sup_{s\geq 0} \big(rs - \phi(s)\big). 
\end{equation*} 
Figure~\ref{fig:NfunctionsIntro} illustrates the convex conjugate $\varphi^*$ and indicates the following properties proven for example in  \cite{DieningEttwein08,DieningFornasierTomasiWank20}. 
\begin{figure}
  \begin{tikzpicture}[scale=1.6]
    \fill [gray!90, domain=0:2.6, variable=\x]
      (0, 0)
      -- plot ({\x}, {1-cos(deg(pi/3*\x)))})
      -- (2.6, 0)
      -- cycle;
      \fill [gray!30, domain=0:2, variable=\x]
      (0, 0)
      -- plot ({\x}, {1-cos(deg(pi/3*\x)))})
      -- (0, {1-cos(deg(pi/3*2))})
      -- cycle;

	\node[fill=white] () at (2,.7) {$\varphi(s)$};
	\node[fill=white] () at (.8,1) {$\varphi^*(r)$};
	\node[below] () at (2.6,0) {$s$};
	\node[left] () at (0,{1-cos(deg(pi/3*2))}) {$r$};

	\draw[dashed] (0,0) 
	-- (2.6,0)
	-- (2.6, {1-cos(deg(pi/3*2))}) 
	-- (0, {1-cos(deg(pi/3*2))});


%
	
    \draw [thick] [->] (0,0)--(2.8,0) node[right] {$t$};

    \draw [thick] [->] (0,0)--(0,2.1) node[above] {$u$};

    \draw [domain=0:2.8, variable=\x]
      plot ({\x}, {1-cos(deg(pi/3*\x)))}) node[right] {$\varphi'(t),\ (\varphi^*)'(u)$};
  \end{tikzpicture}
  \caption{Plot of derivatives $\varphi'$ and $(\varphi^*)'$. The area marked dark gray equals $\varphi(s)$, the area marked light gray equals $\varphi^*(r)$. The area surrounded by the dashed line equals $rs$.}\label{fig:NfunctionsIntro}
\end{figure}
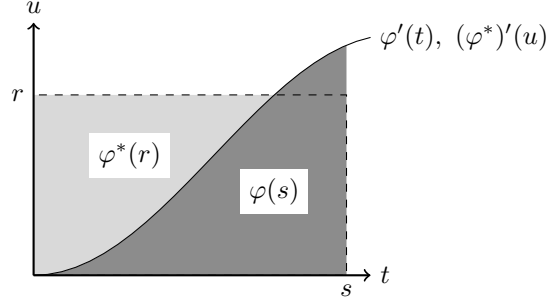%
\begin{proposition}[Conjugate of an N-function]\label{prop:Conjugate}
Let $\varphi\colon \mathbb{R}_{\geq 0} \to \mathbb{R}_{\geq 0}$ be an N-function and define the right-continuous inverse
\begin{equation*}
(\varphi')^{-1}(t) \coloneqq \sup\lbrace r \geq 0 \mid \varphi'(r) \leq t\rbrace\qquad\text{for all }t\geq 0.
\end{equation*}
Then we have the following.
\begin{enumerate}
\item If $\varphi'$ is strictly increasing, the function $(\varphi')^{-1}$ equals the inverse of $\varphi'$.
\item The convex conjugate of $\varphi$ is an $N$-function with the representation
\begin{equation*}
\varphi^*(r) =  \int_0^r (\varphi')^{-1}(u)\du\qquad\text{for all }r \geq 0.
\end{equation*} 
\item One has for all $t\geq 0$ the identity \label{itm:ConjugateC}
\begin{equation*}
\phi^*(\phi'(t)) = \phi'(t)t - \phi(t).
\end{equation*}
\item The function $\phi$ equals its bi-conjugate, that is for all $t\geq 0$\label{itm:biConjugate}
\begin{equation*}
\phi(t) = \phi^{**}(t) \coloneqq \sup_{s \geq 0} \big(t s - \phi^*(s)\big).
\end{equation*}
\end{enumerate}
 \end{proposition}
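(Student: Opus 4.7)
The plan is to prove the four parts in order, treating part (i) as an immediate unwrapping of definitions and then leveraging the Fenchel--Young inequality $rs \leq \varphi(s) + \varphi^*(r)$ together with the fact that the right derivative $\varphi'$ is a subgradient of the convex function $\varphi$. For part (i), if $\varphi'$ is strictly increasing, then $\varphi'(r) \leq t$ is equivalent to $r$ lying below the usual inverse of $\varphi'$ at $t$, so the supremum in the definition of $(\varphi')^{-1}$ coincides with the ordinary inverse.

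For part (ii), I would first verify that $\psi(r) \coloneqq \int_0^r (\varphi')^{-1}(u)\du$ satisfies the conditions of Definition~\ref{def:NfunctionIntro}. The function $(\varphi')^{-1}$ inherits monotonicity and right-continuity from $\varphi'$ via the supremum construction, and the properties $(\varphi')^{-1}(0) = 0$, $\lim_{t\to\infty}(\varphi')^{-1}(t) = \infty$, and $(\varphi')^{-1}(t) > 0$ for $t > 0$ are immediate from the corresponding properties of $\varphi'$. It remains to show $\psi = \varphi^*$. Fixing $r \geq 0$, the concave function $s \mapsto rs - \varphi(s)$ attains its supremum at any $s_\star$ with $r \in [\varphi'(s_\star^-),\varphi'(s_\star^+)]$, and the choice $s_\star = (\varphi')^{-1}(r)$ is the canonical such maximizer. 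A clean way to identify $\psi$ with $\varphi^*$ is then to apply Fubini to the region $\{(s,u) : 0 \leq u \leq r,\ 0 \leq s,\ \varphi'(s) \leq u\}$, recovering the two areas visualized in Figure~\ref{fig:NfunctionsIntro}.

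Part (iii) follows in two steps: the inequality $\varphi^*(\varphi'(t)) \geq \varphi'(t)t - \varphi(t)$ comes from using $s = t$ as a candidate in the supremum defining $\varphi^*$, while the reverse inequality is a direct consequence of the subgradient inequality $\varphi(s) \geq \varphi(t) + \varphi'(t)(s - t)$, valid for all $s \geq 0$ because $\varphi'$ is a monotone right derivative of the convex function $\varphi$. For part (iv), plugging $s = \varphi'(t)$ into the supremum defining $\varphi^{**}(t)$ and invoking (iii) yields $\varphi^{**}(t) \geq \varphi(t)$, while the Fenchel--Young inequality applied with the definition of $\varphi^*$ gives the reverse inequality $\varphi^{**}(t) \leq \varphi(t)$.

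The main obstacle is part (ii), specifically handling points where $\varphi'$ has jumps so that the supremum defining $\varphi^*(r)$ is attained on a whole interval of $s$-values. The right-continuous inverse in the statement is designed precisely to pick a valid endpoint of this interval, and once that maximizer is identified the integral representation follows by the geometric area argument; all subsequent parts are then routine applications of convex duality.
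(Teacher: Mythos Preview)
Your argument is correct and follows the standard route for these classical facts about N-functions and their conjugates. Note, however, that the paper does not actually supply a proof of this proposition: it merely refers to Figure~\ref{fig:NfunctionsIntro} for intuition and cites \cite{DieningEttwein08,DieningFornasierTomasiWank20} for the details. There is therefore nothing in the paper to compare against beyond that figure, and your Fubini/area identification in part~(ii) is precisely the geometric picture the figure is meant to convey. The remaining parts~(i), (iii), and~(iv) are handled exactly as one would expect from the cited references, via the subgradient inequality and the Fenchel--Young inequality.
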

The results displayed in Proposition~\ref{prop:Conjugate} yield the following duality theorem, which we state for clarity of presentation for N-functions $\varphi$.
Notice that the duality theorem can be generalized to certain integrands $\phi$ such as the one in \eqref{eq:PhiBingham} that are not N-functions but just Young functions. We use the notation for scalar valued functions in $V$ and assume homogeneous Dirichlet boundary conditions. The arguments naturally extend to inhomogeneous mixed boundary conditions and vector-valued function spaces $V$, where the gradient $\nabla$ can be replaced by the symmetric gradient, see for example Section~\ref{subsec:pStokes}--\ref{subsec:Bingham} below.

We define the dual energy 
\begin{equation*}
\mathcal{J}^*(\tau) \coloneqq \int_\Omega \phi^*(|\tau|) \dx\qquad\text{for all }\tau \in L^1(\Omega;\mathbb{R}^d).
\end{equation*}
Moreover, we say that $\tau \in L^1(\Omega;\mathbb{R}^d)$ satisfies $-\divergence \tau = f$ in $V^*$, if
\begin{align}
\int_\Omega \tau \cdot \nabla v \dx = \int_\Omega f v \dx\qquad\text{for all }v\in V. 
\end{align}

\begin{theorem}[Duality]\label{thm:Duality}
For the energy $\mathcal{J}$ in \eqref{eq:Energy} with integrand $\phi$ being an N-function one has 
\begin{equation}\label{eq:PrimalDual}
\inf_{v\in V} \mathcal{J}(v) = -  \inf_{\substack{\tau \in L^1(\Omega;\mathbb{R}^d)\\ -\divergence \tau = f \text{ in }V^*}} \mathcal{J}^*(\tau).
\end{equation}
There exist minimizers $u\in V$ and $\sigma \in L^1(\Omega;\mathbb{R}^d)$ with $-\divergence \sigma = f$ in $V^*$; that is,
\begin{equation*}
u = \argmin_{v\in V} \mathcal{J}(v)\qquad\text{and}\qquad \sigma = \argmin_{\substack{\tau \in L^1(\Omega;\mathbb{R}^d)\\ -\divergence \tau = f \text{ in }V^*}} \mathcal{J}^*(\tau).
\end{equation*}
Minimizers are related via the identities
\begin{equation*}
\sigma = \frac{\phi'(|\nabla u|)}{|\nabla u|} \nabla u\qquad\text{and}\qquad \nabla u = \frac{(\phi^*)'(|\sigma|)}{|\sigma|} \sigma.
\end{equation*}
\end{theorem}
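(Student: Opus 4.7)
The plan is to prove Theorem~\ref{thm:Duality} in the classical Fenchel style: first establish the weak duality inequality via the pointwise Fenchel--Young inequality, then produce a primal minimizer by the direct method and construct an admissible dual candidate from it, and finally use the equality case of Fenchel--Young (item~\ref{itm:ConjugateC} of Proposition~\ref{prop:Conjugate}) to close the duality gap.

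First, for any $v \in V$ and any $\tau \in L^1(\Omega;\mathbb{R}^d)$ with $-\divergence \tau = f$ in $V^*$, I would apply the pointwise Fenchel--Young inequality
\[
\phi(|\nabla v|) + \phi^*(|\tau|) \;\geq\; |\nabla v|\,|\tau| \;\geq\; \nabla v \cdot \tau,
\]
integrate over $\Omega$, and use the divergence constraint tested with $v$ to substitute $\int_\Omega \nabla v \cdot \tau \dx = \int_\Omega f v \dx$. Rearranging yields $\mathcal{J}(v) + \mathcal{J}^*(\tau) \geq 0$, so after taking infima I obtain the weak duality bound
\[
\inf_{v\in V} \mathcal{J}(v) \;\geq\; -\inf_{\tau}\mathcal{J}^*(\tau).
\]

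Next I would produce a primal minimizer $u$. Since $\phi$ is an N-function it is convex and superlinear, so $\mathcal{J}$ is convex, coercive, and lower semicontinuous on $V$; the direct method yields existence of $u = \argmin_{v \in V} \mathcal{J}(v)$. Taking the variational derivative reproduces \eqref{eq:VarProb}, so the field
\[
\sigma \coloneqq \frac{\phi'(|\nabla u|)}{|\nabla u|}\,\nabla u
\]
(set to $0$ where $|\nabla u|=0$) satisfies $-\divergence\sigma = f$ in $V^*$. Noting $|\sigma|=\phi'(|\nabla u|)$ and $\sigma\cdot\nabla u = \phi'(|\nabla u|)|\nabla u|$, item~\ref{itm:ConjugateC} of Proposition~\ref{prop:Conjugate} gives pointwise
\[
\phi(|\nabla u|) + \phi^*(|\sigma|)
= \phi(|\nabla u|) + \phi'(|\nabla u|)|\nabla u| - \phi(|\nabla u|)
= \sigma\cdot\nabla u,
\]
i.e.\ equality in Fenchel--Young. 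Integrating and using again the divergence identity gives $\mathcal{J}(u) + \mathcal{J}^*(\sigma) = 0$, which chained with the weak duality bound forces equality throughout and shows that $\sigma$ is a dual minimizer, establishing \eqref{eq:PrimalDual} and the first identity. The second identity $\nabla u = (\phi^*)'(|\sigma|)|\sigma|^{-1}\sigma$ then follows from the symmetry provided by the bi-conjugacy $\phi=\phi^{**}$ (item~\ref{itm:biConjugate}), by applying the same computation with the roles of $\phi$ and $\phi^*$ interchanged.

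The main technical obstacle I expect is verifying admissibility of the constructed $\sigma$, specifically $\sigma\in L^1(\Omega;\mathbb{R}^d)$ and the divergence identity as elements of $V^*$: the latter requires that the Fenchel--Young integrals be finite so that the Euler--Lagrange equation \eqref{eq:VarProb} can be tested against all $v\in V$, which in turn relies on suitable Orlicz-type integrability of $\phi'(|\nabla u|)$ driven by the N-function properties. A minor subtlety is the handling of the degenerate set $\{|\nabla u|=0\}$ (respectively $\{|\sigma|=0\}$) in the fractional expressions defining $\sigma$ and $\nabla u$, which is resolved by interpreting these as subdifferential selections of $\phi$ and $\phi^*$ consistent with Proposition~\ref{prop:Conjugate}.
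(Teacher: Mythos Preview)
Your proposal is correct and follows essentially the same route as the paper: weak duality, then construction of the dual candidate $\sigma$ from the primal Euler--Lagrange equation, then closure of the gap via the equality case in Proposition~\ref{prop:Conjugate}\ref{itm:ConjugateC}. The only cosmetic difference is that the paper phrases weak duality as an $\inf$--$\sup$ swap using $\phi=\phi^{**}$ rather than the pointwise Fenchel--Young inequality, and it derives the second identity by direct algebra ($|\sigma|=\phi'(|\nabla u|)\Rightarrow |\nabla u|=(\phi^*)'(|\sigma|)$) rather than by appealing to symmetry; for the $L^1$ obstacle you flag, the paper simply tests \eqref{eq:VarProb} with $v=u$ to obtain $\int_\Omega \phi'(|\nabla u|)\,|\nabla u|\dx<\infty$ and then splits $\Omega$ into $\{|\nabla u|\le 1\}$ and $\{|\nabla u|>1\}$.
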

\begin{proof}
The identity in Proposition~\ref{prop:Conjugate}~\ref{itm:biConjugate}, together with the fact that exchanging $\inf$ and $\sup$ decreases the value of the expression, yields
\begin{subequations}
\begin{equation}\label{eq:ProofDual}
\begin{aligned}
\inf_{v\in V} \mathcal{J}(v) &= \inf_{v\in V} \int_\Omega \varphi(|\nabla v|) - fv \dx \\
& = \inf_{v\in V}\, \sup_{\tau\colon \Omega \to \mathbb{R}^d} \int_\Omega  \tau \cdot \nabla v - \phi^*(|\tau|) - fv \dx\\
&\geq  \inf_{v\in V}\, \sup_{\tau \in L^1(\Omega;\mathbb{R}^d)} \int_\Omega \tau \cdot \nabla v - \phi^*(|\tau|) - fv \dx \\
&  \geq \sup_{\tau \in L^1(\Omega;\mathbb{R}^d)} \inf_{v\in V} \int_\Omega \tau \cdot \nabla v - fv \dx - \int_\Omega  \phi^*(|\tau|) \dx \eqqcolon (*).
\end{aligned}
\end{equation}
Let $\tau \in L^1(\Omega;\mathbb{R}^d)$ be such that $\int_\Omega \tau \cdot \nabla v - fv \dx \neq 0$ for some $v \in V$. By appropriately scaling $v$, we see that $\inf_{v \in V} \int_\Omega \tau \cdot \nabla v - fv \dx = -\infty$. Since $(*)$ involves taking the supremum over all $\tau \in L^1(\Omega;\mathbb{R}^d)$, we may disregard functions $\tau$ for which the inner infimum is $-\infty$. That is, it suffices to consider the supremum over all $\tau \in L^1(\Omega;\mathbb{R}^d)$ satisfying $-\divergence \tau = f$ in $V^*$. This leads to the identity
\begin{equation}
(*) = \sup_{\substack{\tau \in L^1(\Omega;\mathbb{R}^d)\\ -\divergence \tau = f \text{ in }V^*}} -\int_\Omega \phi^*(|\tau|) \dx = - \inf_{\substack{\tau \in L^1(\Omega;\mathbb{R}^d)\\ -\divergence \tau = f \text{ in }V^*}} \mathcal{J}^*(\tau).
\end{equation}
\end{subequations}
Hence, the minimal energy in \eqref{eq:Energy} is bounded from below by the negative of the minimal dual energy. Note that both minima are attained, which can be established via the direct method in the calculus of variations, and they are unique due to the strict convexity of the energies.
Let $u\in V$ denote the solution to \eqref{eq:Energy} and define the function
\begin{equation}\label{eq:CharSig}
\sigma \coloneqq \frac{\phi'(|\nabla u|)}{|\nabla u|} \nabla u.
\end{equation}
Taking the absolute value shows $|\sigma | = \phi'(|\nabla u|)$. Using this identity in \eqref{eq:CharSig} yields
\begin{equation*}
\frac{|\nabla u|}{|\sigma|} \sigma = \nabla u.
\end{equation*}
Proposition~\ref{prop:Conjugate} implies that 
$|\nabla u| = (\varphi^*)'(\varphi'(|\nabla u|)) = (\varphi^*)'(|\sigma|)$. Combining theses results yields the characterization of $\nabla u$ in terms of $\sigma$, that is, 
\begin{equation*}
\nabla u = \frac{(\phi^*)'(|\sigma|)}{|\sigma|} \sigma.
\end{equation*}

The characterization of $u\in V$ in \eqref{eq:VarProb} reveals by testing with $v = u$ that
\begin{equation*}
\int_\Omega \phi'(|\nabla u|) |\nabla u| \dx = \int_\Omega fu\dx < \infty.
\end{equation*}
With the identity $|\sigma | = \phi'(|\nabla u|)$ and the monotonicity of $\phi'$ this bound leads to
\begin{equation*}
\int_\Omega |\sigma| \dx = \int_\Omega \phi'(|\nabla u|) \dx \leq \int_{\lbrace|\nabla u| \leq 1\rbrace} \phi'(1) \dx + \int_{\lbrace |\nabla u| > 1\rbrace}  \phi'(|\nabla u|) |\nabla u| \dx  < \infty.
\end{equation*}
Hence, $\sigma$ is in $L^1(\Omega;\mathbb{R}^d)$ and satisfies $-\divergence \sigma = f$ in $V^*$  due to \eqref{eq:VarProb}.
This property, the identity $|\sigma | = \phi'(|\nabla u|)$, and Proposition~\ref{prop:Conjugate}~\ref{itm:ConjugateC} yield
\begin{align*}
- \inf_{\substack{\tau \in L^1(\Omega;\mathbb{R}^d)\\ -\divergence \tau = f \text{ in }V^*}} \mathcal{J}^*(\tau) &\geq 
- \int_\Omega \varphi^*(|\sigma |)\dx  = - \int_\Omega \frac{\phi'(|\nabla u|)}{|\nabla u|} \nabla u\, \cdot  \nabla u - \varphi(|\nabla u|) \dx \\
&= \int_\Omega \varphi(|\nabla u|) - fu \dx  = \mathcal{J}(u) = \inf_{v\in V} \mathcal{J}(v).
\end{align*}
Combing this estimate with \eqref{eq:ProofDual} concludes the proof.
\end{proof}
It is a well-known technique, see e.g.~\cite{BraessSchoeberl08,Repin08,CarstensenMerdon14,ErnVohralik15,BartelsMilicevic20,BartelsKaltenbach23,FevotteRappaportVohralik24}, 
to compute a function $\tau \in L^1(\Omega;\mathbb{R}^d)$ with $- \divergence \tau = f$ in $V^*$ and use the duality relation in Theorem~\ref{thm:Duality} to obtain the guaranteed and computable upper bound 
\begin{equation*}
\mathcal{J}(v) - \min_{w\in V} \mathcal{J}(w) \leq \mathcal{J}(v) + \mathcal{J}^*(\tau)\qquad\text{for all }v\in V.
\end{equation*}
Our idea, used already for the $p$-Laplacian in \cite{BalciDieningStorn23}, is to use duality on the discrete level. 

\section{Discrete duality}\label{sec:DiscDuality}

In this section we apply the arguments from Section~\ref{sec:Duality} to the discretized minimization problem. 
Let $\tria$ be a regular triangulation of $\Omega \subset \mathbb{R}^d$, let $\mathbb{P}_k(T)$ denote the space of polynomials on $T$ with maximal degree $k\in \mathbb{N}_0$, and define 
\begin{equation*}
\begin{aligned}
\mathbb{P}_k(\tria) & \coloneqq \lbrace p_h \in L^1(\Omega) \colon p_h|_T \in \mathbb{P}_k(T)\text{ for all }T\in \tria\rbrace.
\end{aligned}
\end{equation*}
Our finite element space $V_h$ is a finite dimensional space such as
\begin{itemize}
\item the Lagrange finite element space $\mathcal{L}^1_{k,0}(\tria) \coloneqq  \mathbb{P}_k(\tria) \cap W^{1,1}_0(\Omega)$ of order $k\in \mathbb{N}$,
\item the Crouzeix--Raviart finite element space $\CRo \coloneqq \lbrace v_h \in \mathbb{P}_1(\tria)\colon v_h$ is continuous in all midpoints of interior faces of $\tria$ and equals zero in all barycenters of faces of $\tria$ on the boundary $\partial \Omega\rbrace$,
\item the space of discretely divergence free finite element functions such as the Taylor--Hood element
\begin{equation*}
\Big\lbrace v_h \in \mathcal{L}^1_{2,0}(\tria)^d  \colon \int_\Omega p_h \divergence v_h \dx = 0\text{ for all }p_h\in \mathcal{L}^1_1(\tria)\Big\rbrace,
\end{equation*}
the Crouzeix--Raviart element
\begin{equation*}
\Big\lbrace v_h \in \CRo^d  \colon \sum_{T\in \tria} \int_T p_h \divergence v_h \dx = 0\text{ for all }p_h \in \mathbb{P}_0(\tria)\Big\rbrace,
\end{equation*} 
or the Kouhia--Stenberg element (for $d=2$)
\begin{equation*}
\Big\lbrace v_h \in \mathcal{L}^1_{1,0}(\tria) \times \CRo \colon \sum_{T\in \tria} \int_T p_h \, \divergence v_h\dx = 0\text{ for all }p_h\in \mathbb{P}_0(\tria) \Big\rbrace.
\end{equation*}
\end{itemize}
Notice that the gradient is not necessarily defined as an $L^2$ function for certain non-conforming spaces such as $\CRo$. Thus, we replace the gradient by its broken version $
\nabla_h \colon W^{1,1}(\Omega) + \mathbb{P}_1(\tria) \to L^2(\Omega;\mathbb{R}^d)$ with
\begin{equation*}
(\nabla_h v)|_T \coloneqq \nabla v|_T\qquad \text{for all }v\in W^{1,1}(\Omega) + \mathbb{P}_1(\tria) \text{ and }T\in \tria.
\end{equation*}
Notice that $\nabla_h v = \nabla v$ for all functions $v\in W^{1,1}(\Omega)$. The resulting discretized problem seeks the minimizer
\begin{equation}\label{eq:discrePrimalProb}
u_h = \argmin_{v_h \in V_h} \mathcal{J}(v_h) \quad \text{with }\mathcal{J}(v_h) \coloneqq \int_\Omega \phi(|\nabla_h v_h|) \dx - \int_\Omega fv_h\dx.
\end{equation}
The minimizer is characterized as the unique solution to the variational problem: Seek $u_h \in V_h$ with 
\begin{equation}\label{eq:DiscrVarProb}
\int_\Omega \frac{\varphi'(|\nabla_h u_h|)}{|\nabla_h u_h|} \nabla_h u_h \cdot \nabla_h v_h \dx = \int_\Omega fv_h \dx\qquad\text{for all }v_h\in V_h.
\end{equation}
The corresponding dual problem involves the side constraint $-\divergence_h \tau = f$ in $V_h^*$, which is interpreted for all $\tau \in L^1(\Omega;\mathbb{R}^d)$ as 
\begin{equation}\label{eq:DiscrDiv}
\int_\Omega \tau \cdot \nabla_h v_h \dx = \int_\Omega fv_h\dx \qquad\text{for all }v_h \in V_h.
\end{equation}
The dual problem seeks the minimal value 
\begin{equation*}
\inf_{\substack{\tau \in L^1(\Omega;\mathbb{R}^d)\\ -\divergence_h \tau = f \text{ in }V_h^*}}  \mathcal{J}^*(\tau) \qquad\text{with }\mathcal{J}^*(\tau) \coloneqq  \int_\Omega \phi^*(|\tau|) \dx. 
\end{equation*} 
\begin{theorem}[Discrete duality]\label{thm:discreteDuality}
For the energy $\mathcal{J}$ in \eqref{eq:discrePrimalProb} with N-function $\varphi$ and discrete space $V_h$ one has 
\begin{equation*}
\min_{v_h\in V_h} \mathcal{J}(v_h) = -  \min_{\substack{\tau \in L^1(\Omega;\mathbb{R}^d)\\ -\divergence_h \tau = f \text{ in }V_h^*}} \mathcal{J}^*(\tau).
\end{equation*}
There exist minimizers $u_h\in V_h$ and $\sigma_h \in L^1(\Omega;\mathbb{R}^d)$ to the corresponding minimization problems; that is,
\begin{equation*}
\begin{aligned}
u_h = \argmin_{v_h\in V_h} \mathcal{J}(v_h)\qquad\text{and}\qquad \sigma_h = \argmin_{\substack{\tau \in L^1(\Omega;\mathbb{R}^d)\\ -\divergence_h \tau = f \text{ in }V_h^*}} \mathcal{J}^*(\tau).
\end{aligned}
\end{equation*}
Minimizers are related via the identity
\begin{equation*}
\sigma_h \coloneqq \frac{\phi'(|\nabla_h u_h|)}{|\nabla_h u_h|} \nabla_h u_h\qquad\text{and}\qquad \nabla_h u_h \coloneqq \frac{(\phi^*)'(|\sigma_h|)}{|\sigma_h|} \sigma_h.
\end{equation*}
\end{theorem}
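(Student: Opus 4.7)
The plan is to mirror the proof of Theorem~\ref{thm:Duality} almost verbatim, noting where the discrete setting simplifies or requires care. The only structural difference is that the test space $V$ is replaced by the finite-dimensional $V_h$ and the gradient by its broken counterpart $\nabla_h$; the min--max manipulation from \eqref{eq:ProofDual} still applies, with one convenient simplification and one point that needs a short check.

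First I would rewrite the primal energy via the bi-conjugate identity in Proposition~\ref{prop:Conjugate}~\ref{itm:biConjugate}, which yields
\begin{equation*}
\mathcal{J}(v_h) = \sup_{\tau \in L^1(\Omega;\mathbb{R}^d)} \int_\Omega \tau \cdot \nabla_h v_h - \phi^*(|\tau|) - fv_h \dx,
\end{equation*}
because, per $T\in\tria$, the pointwise supremum over $\tau\in\mathbb{R}^d$ can be realized by an $L^1$ function (and in fact by a piecewise constant one, since $\nabla_h v_h$ is piecewise constant for polynomial $v_h$). Exchanging $\inf_{v_h\in V_h}$ with $\sup_\tau$ gives the weak-duality inequality $\min_{v_h} \mathcal{J}(v_h) \geq (\ast)$ in complete analogy to \eqref{eq:ProofDual}. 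The inner infimum over $v_h \in V_h$ of the linear functional $\int_\Omega (\tau \cdot \nabla_h v_h - f v_h)\dx$ is either $0$ or $-\infty$ (by scaling), and finiteness is exactly the discrete constraint \eqref{eq:DiscrDiv}, so restricting to $\tau$ with $-\divergence_h\tau = f$ in $V_h^*$ yields
\begin{equation*}
\min_{v_h \in V_h} \mathcal{J}(v_h) \geq -\inf_{\substack{\tau \in L^1(\Omega;\mathbb{R}^d)\\ -\divergence_h \tau = f \text{ in }V_h^*}} \mathcal{J}^*(\tau).
\end{equation*}

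For the matching upper bound, I would exhibit a candidate $\sigma_h$. Existence and uniqueness of the primal minimizer $u_h$ follow from the direct method together with the strict convexity inherited from the N-function $\phi$ on the finite-dimensional space $V_h$ (coercivity on $V_h$ is automatic once we note that $\phi$ is superlinear). Define
\begin{equation*}
\sigma_h \coloneqq \frac{\phi'(|\nabla_h u_h|)}{|\nabla_h u_h|} \nabla_h u_h
\end{equation*}
pointwise (set $\sigma_h = 0$ where $\nabla_h u_h = 0$). The discrete Euler--Lagrange equation \eqref{eq:DiscrVarProb} is then precisely the discrete divergence constraint $-\divergence_h \sigma_h = f$ in $V_h^*$. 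The $L^1$-bound for $\sigma_h$ is obtained exactly as in Theorem~\ref{thm:Duality}: test \eqref{eq:DiscrVarProb} with $v_h = u_h$, use $|\sigma_h| = \phi'(|\nabla_h u_h|)$ and monotonicity of $\phi'$ to split into $\{|\nabla_h u_h|\leq 1\}$ and its complement.

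Finally I would close the duality by pointwise application of Proposition~\ref{prop:Conjugate}~\ref{itm:ConjugateC}, which gives $\phi^*(|\sigma_h|) = \phi'(|\nabla_h u_h|)|\nabla_h u_h| - \phi(|\nabla_h u_h|)$; integrating and using $-\divergence_h \sigma_h = f$ tested with $u_h$ produces
\begin{equation*}
-\mathcal{J}^*(\sigma_h) = \int_\Omega \phi(|\nabla_h u_h|) - f u_h \dx = \mathcal{J}(u_h),
\end{equation*}
which combined with weak duality forces equality and identifies $\sigma_h$ as the (unique, by strict convexity of $\phi^*$ on the relevant range) dual minimizer. The relation $\nabla_h u_h = (\phi^*)'(|\sigma_h|)\sigma_h/|\sigma_h|$ then follows from $|\nabla_h u_h| = (\phi^*)'(\phi'(|\nabla_h u_h|)) = (\phi^*)'(|\sigma_h|)$, exactly as in the continuous proof. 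The only genuine subtlety, and the one place where I would be most careful, is checking that the admissible set for the dual problem is nonempty and that strict convexity of $\mathcal{J}^*$ survives on this affine set despite the weaker discrete divergence constraint; both follow from the explicit construction of $\sigma_h$ and the strict convexity of $\phi^*$ away from $0$.
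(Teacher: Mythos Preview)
Your proposal is correct and matches the paper's approach exactly: the paper's own proof simply reads ``The theorem follows by the same arguments as Theorem~\ref{thm:Duality},'' and you have spelled out precisely those arguments with $V$, $\nabla$ replaced by $V_h$, $\nabla_h$. (One harmless slip: for higher-order $V_h$ the broken gradient $\nabla_h v_h$ is piecewise polynomial rather than piecewise constant, but the pointwise supremizer $\tau=\phi'(|\nabla_h v_h|)\nabla_h v_h/|\nabla_h v_h|$ is still bounded on each element and hence in $L^1$, so the argument is unaffected.)
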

\begin{proof}
The theorem follows by the same arguments as Theorem~\ref{thm:Duality}.
\end{proof}

\begin{corollary}[Constant gradients]\label{cor:Duality}
If $\nabla_h V_h \subset \mathbb{P}_0(\tria)^d$, one has   
\begin{equation}\label{eq:DiscretDualityP0}
\min_{v_h\in V_h} \mathcal{J}(v_h) = -  \min_{\substack{\tau \in \mathbb{P}_0(\tria)^d\\ -\divergence_h \tau = f \text{ in }V_h^*}} \mathcal{J}^*(\tau).
\end{equation}
\end{corollary}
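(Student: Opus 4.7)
The plan is to reduce the dual minimization from Theorem~\ref{thm:discreteDuality} (taken over $L^1(\Omega;\mathbb{R}^d)$) to a minimization over $\mathbb{P}_0(\tria)^d$ by an elementwise averaging argument that exploits the piecewise-constancy of $\nabla_h V_h$. Since $\mathbb{P}_0(\tria)^d \subset L^1(\Omega;\mathbb{R}^d)$, the inequality ``$\geq$'' in \eqref{eq:DiscretDualityP0} is immediate, so the whole task is to show that for every admissible $\tau \in L^1(\Omega;\mathbb{R}^d)$ there is an admissible $\tilde{\tau} \in \mathbb{P}_0(\tria)^d$ with $\mathcal{J}^*(\tilde{\tau}) \leq \mathcal{J}^*(\tau)$.

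To this end, let $\tau \in L^1(\Omega;\mathbb{R}^d)$ satisfy $-\divergence_h \tau = f$ in $V_h^*$ and define the piecewise-constant projection
\begin{equation*}
\tilde{\tau}|_T \coloneqq \frac{1}{|T|}\int_T \tau\dx \qquad \text{for all } T\in \tria.
\end{equation*}
First I would verify that $\tilde{\tau}$ is admissible. Fix $v_h \in V_h$ and note that, by assumption, $(\nabla_h v_h)|_T$ is a constant vector on each $T\in \tria$. Hence
\begin{equation*}
\int_T \tilde{\tau} \cdot \nabla_h v_h\dx = (\nabla_h v_h)|_T \cdot \int_T \tau\dx = \int_T \tau \cdot \nabla_h v_h\dx,
\end{equation*}
and summing over $T\in \tria$ together with \eqref{eq:DiscrDiv} for $\tau$ yields $-\divergence_h \tilde{\tau} = f$ in $V_h^*$.

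It remains to show $\mathcal{J}^*(\tilde{\tau}) \leq \mathcal{J}^*(\tau)$. Since $\phi^*$ is an N-function by Proposition~\ref{prop:Conjugate}, it is convex and non-decreasing on $[0,\infty)$. Applying the triangle inequality in $\mathbb{R}^d$ and then Jensen's inequality to the convex, non-decreasing function $\phi^*$ gives on every $T\in \tria$
\begin{equation*}
\phi^*(|\tilde{\tau}|_T|) \leq \phi^*\!\left(\frac{1}{|T|}\int_T |\tau|\dx\right) \leq \frac{1}{|T|}\int_T \phi^*(|\tau|)\dx.
\end{equation*}
Multiplying by $|T|$ and summing over $T\in \tria$ yields $\mathcal{J}^*(\tilde{\tau}) \leq \mathcal{J}^*(\tau)$. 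Combining this with Theorem~\ref{thm:discreteDuality} proves \eqref{eq:DiscretDualityP0}.

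The only delicate point is the monotonicity of $\phi^*$ used in the first inequality of Jensen's step; I expect this to be the main thing to flag, but it follows directly from Proposition~\ref{prop:Conjugate}, which states that $\phi^*$ is itself an N-function and hence in particular non-decreasing. Everything else is bookkeeping with the elementwise average and the hypothesis $\nabla_h V_h \subset \mathbb{P}_0(\tria)^d$.
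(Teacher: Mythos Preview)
Your proposal is correct and follows essentially the same route as the paper: define the elementwise $L^2$ projection $\Pi_0\tau$ (your $\tilde{\tau}$), use $\nabla_h V_h\subset\mathbb{P}_0(\tria)^d$ to preserve the constraint $-\divergence_h\tilde{\tau}=f$ in $V_h^*$, and invoke Jensen's inequality to get $\mathcal{J}^*(\tilde{\tau})\leq\mathcal{J}^*(\tau)$. The only cosmetic difference is that the paper applies Jensen directly to the convex map $\tau\mapsto\phi^*(|\tau|)$ on $\mathbb{R}^d$, whereas you split this into the triangle inequality plus scalar Jensen; both are fine.
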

\begin{proof}
Let $\Pi_0\colon L^1(\Omega;\mathbb{R}^d) \to \mathbb{P}_0(\tria)^d$ denote the $L^2(\Omega)$ orthogonal projection onto piece-wise constant functions. 
Any $\tau \in L^1(\Omega;\mathbb{R}^d)$ with $-\divergence_h \tau = f \text{ in }V_h^*$ satisfies
\begin{equation*}
\int_\Omega \Pi_0 \tau\cdot \nabla_h v_h \dx =  \int_\Omega \tau\cdot \nabla_h v_h \dx = \int_\Omega fv_h\dx \qquad\text{for all }v_h \in V_h.
\end{equation*}
Hence, the function $\Pi_0 \tau$ satisfies $-\divergence_h \Pi_0 \tau = f$ in $V_h^*$. Furthermore, Jensen's inequality reveals for the convex energy $\mathcal{J}^*$ that
\begin{equation*}
\mathcal{J}^*(\Pi_0 \tau) \leq \mathcal{J}^*(\tau).
\end{equation*}
Combining this observation with Theorem~\ref{thm:discreteDuality} concludes the proof.
\end{proof}

\begin{remark}[Marini identity]\label{rem:Marini}
It is known, cf.~\cite{Marini85} for the linear and \cite{CarstensenLiu15,LiuLiChen18,Bartels21} for the non-linear case, that for right-hand sides $f \in \mathbb{P}_0(\tria)$ the solution to the Crouzeix--Raviart FEM is linked to a discrete dual problem with lowest-order Raviart--Thomas elements $RT_0(\tria)$ in the sense that with $L^2(\Omega)$ orthogonal projection $\Pi_0 \colon L^1(\Omega;\mathbb{R}^d) \to \mathbb{P}_0(\tria)^d$ onto piece-wise constant functions one has \cite[Prop.~3.1]{Bartels21}
\begin{equation*}
\inf_{v_h \in \CRo} \mathcal{J}(v_h) = - \inf_{\substack{\tau_h \in RT_0(\tria)\\-\divergence_h \tau = f\text{ in }\CRo^*}} \mathcal{J}^*(\Pi_0\tau_h).
\end{equation*}
This result follows from the non-trivial observation that the discrete dual minimizer
\begin{equation*}
\sigma_h = \frac{\phi'(|\nabla_h u_h|)}{|\nabla_h u_h|} \nabla_h u_h \in \mathbb{P}_0(\tria)^d
\end{equation*}
in Corollary~\ref{cor:Duality} is the $L^2(\Omega)$ orthogonal projection of a Raviart--Thomas function $\tau_h \in RT_0(\tria)$ with $-\divergence_h \tau_h = f$ in $\CRo^*$ onto piece-wise constants; that is, $\sigma_h = \Pi_0 \tau_h$.
\end{remark}

\section{Guaranteed upper bound}\label{sec:GUB}
Due to the highly singular behavior of solutions to non-linear problems, adaptive finite element methods for non-linear variational problems as in \eqref{eq:Energy} are indispensable. 
Modern adaptive schemes, such as those discussed in \cite{ErnVohralik13,DieningFornasierTomasiWank20,BalciDieningStorn23,FevotteRappaportVohralik24} with current approximation $v_h \in V_h$ of the exact continuous minimizer $u \in V$ in \eqref{eq:Energy} consider the following error components:
\begin{enumerate}
    \item the discretization error $\mathcal{J}(v_h) - \mathcal{J}(u)$,\label{itm:DiscrError}
    \item the regularization error $|\mathcal{J}(v_h) - \mathcal{J}_\varepsilon(v_h)|$, if the solver employs a regularized energy $\mathcal{J}_\varepsilon$, and
    \item the iteration error $\mathcal{J}(v_h) -\min_{V_h} \mathcal{J}$.\label{itm:IterError}
\end{enumerate}
Based on these errors, adaptive routines perform one of the following actions:
\begin{enumerate}
    \item refine the underlying triangulation $\tria$,
    \item adjust the regularization parameter in $\mathcal{J}_\varepsilon$,
    \item execute an iteration in the solving routine for the discretized non-linear problem.
\end{enumerate}
Since in practical applications the exact error quantities are typically unknown, adaptive strategies rely on appropriate error estimators. However, these estimators often involve unknown constants that need to be determined experimentally to obtain (pre-asymptotically, cf.~\cite{HaberlPraetoriusSchimankoVohralik21,BeckerBrunnerInnerbergerMelenkPraetorius23}) good rates of convergence. 
This limitation is partially addressed by the following result, which provides a guaranteed upper bound for the iteration error without any unknown constants.
\begin{theorem}[Discrete primal-dual estimator]\label{thm:discrPrimalDual}
Let $u_h \in V_h$ denote the discrete minimizer in \eqref{eq:discrePrimalProb}, where the integrand $\phi$ is an N-function. 
Moreover, let $\tau \in L^1(\Omega;\mathbb{R}^d)$ with $-\divergence_h \tau = f$ in $V_h^*$ be given. Then one has the computable guaranteed upper bound
\begin{equation*}
\mathcal{J}(v_h) - \mathcal{J}(u_h) \leq \mathcal{J}(v_h) + \mathcal{J}^*(\tau) \qquad\text{for all }v_h \in V_h.
\end{equation*}
\end{theorem}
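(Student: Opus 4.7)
The statement follows essentially immediately from the discrete duality identity in Theorem~\ref{thm:discreteDuality}, so the plan is to assemble the bound in three short steps and flag the single place where one should be careful about admissibility.

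\textbf{Step 1: Invoke discrete duality.} By Theorem~\ref{thm:discreteDuality}, the discrete primal minimum equals minus the discrete dual minimum, that is
\begin{equation*}
\mathcal{J}(u_h) \;=\; \min_{v_h \in V_h} \mathcal{J}(v_h) \;=\; -\min_{\substack{\tau' \in L^1(\Omega;\mathbb{R}^d)\\ -\divergence_h \tau' = f \text{ in }V_h^*}} \mathcal{J}^*(\tau').
\end{equation*}
Equivalently, $-\mathcal{J}(u_h)$ is the infimum of $\mathcal{J}^*$ over the admissible set.

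\textbf{Step 2: Exploit admissibility of $\tau$.} The hypothesis is precisely that the given $\tau$ lies in the feasible set of the dual minimization problem, i.e.\ $\tau\in L^1(\Omega;\mathbb{R}^d)$ with $-\divergence_h \tau = f$ in $V_h^*$. Therefore the infimum in Step~1 is bounded above by the value at $\tau$, giving
\begin{equation*}
-\mathcal{J}(u_h) \;\le\; \mathcal{J}^*(\tau).
\end{equation*}

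\textbf{Step 3: Rearrange.} Adding $\mathcal{J}(v_h)$ to both sides yields the claimed bound
\begin{equation*}
\mathcal{J}(v_h) - \mathcal{J}(u_h) \;\le\; \mathcal{J}(v_h) + \mathcal{J}^*(\tau)
\end{equation*}
for every $v_h \in V_h$; note that the right-hand side is indeed computable, since $v_h$ and $\tau$ are given.

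There is no substantive obstacle in this argument: once Theorem~\ref{thm:discreteDuality} is in hand, the estimator is just the weak duality half of that identity applied at the feasible point $\tau$. The only point that deserves a remark is that the discrete divergence constraint $-\divergence_h \tau = f$ in $V_h^*$ (tested only against functions in the finite element space) is weaker than the continuous one $-\divergence \tau = f$ in $V^*$, so no exact equilibration of $\tau$ with respect to $f$ in $V^*$ is required; this is what makes the bound genuinely computable for arbitrary $f\in V^*$ and is the whole point of performing duality on the discrete level.
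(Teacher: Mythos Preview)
Your proof is correct and is exactly the argument the paper has in mind: the paper's own proof is a one-line appeal to the discrete duality identity in Theorem~\ref{thm:discreteDuality}, and your Steps~1--3 simply spell out that appeal (strong duality gives $-\mathcal{J}(u_h)$ as the dual infimum, feasibility of $\tau$ gives $-\mathcal{J}(u_h)\le \mathcal{J}^*(\tau)$, and adding $\mathcal{J}(v_h)$ finishes). Your closing remark about the discrete divergence constraint being weaker than the continuous one is also precisely the point emphasized in the surrounding discussion.
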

\begin{proof}
The theorem follows from the discrete duality result in Theorem~\ref{thm:discreteDuality}.
\end{proof}
\begin{remark}[Regularization error]\label{rem:AdaptReg}
As illustrated in the numerical experiment of Section~\ref{subsec:Bingham}, the discrete primal dual error estimator can additionally be used to estimate the impact of regularizations. In particular, suppose that $\mathcal{J}_\varepsilon$ is some regularization of the energy $\mathcal{J}$ and we have function $\tau \in L^1(\Omega;\mathbb{R}^d)$ with $-\divergence_h \tau = f$ in $V_h^*$ and $v_h \in V_h$. Discrete duality leads to the two upper bounds
\begin{equation*}
\begin{aligned}
\mathcal{J}_\varepsilon(v_h) - \min_{V_h} \mathcal{J}_\varepsilon &\leq \mathcal{J}_\varepsilon(v_h) + \mathcal{J}_\varepsilon^*(\tau)\eqqcolon \textup{GUB}_\varepsilon,\\
 \mathcal{J}(v_h) - \min_{V_h} \mathcal{J} &\leq \mathcal{J}(v_h) + \mathcal{J}^*(\tau) \eqqcolon \textup{GUB}.
\end{aligned}
\end{equation*}
A very large ratio $\textup{GUB}/\textup{GUB}_\varepsilon$  indicates a significant impact of the regularization. Hence, adjusting the regularization instead of proceeding to solve the regularized problems can be beneficial.
\end{remark}
A necessary requirement for the guaranteed upper bound in Theorem~\ref{thm:discrPrimalDual} is a function $\tau \in L^1(\Omega;\mathbb{R}^d)$ with $-\divergence_h \tau = f$ in $V_h^*$. For certain iterative schemes such functions result as a byproduct of the method. 

For example, given a function $u_{h,n} \in V_h$, the \Kacanov{} scheme computes iteratively $u_{h,n+1} \in V_h$ with
\begin{equation}\label{eq:DefKacanov}
\int_\Omega \frac{\phi'(|\nabla_h u_{h,n}|)}{|\nabla_h u_{h,n}|} \nabla_h u_{h,n+1} \cdot \nabla_h v_h \dx = \int_\Omega fv_h\dx\qquad\text{for all }v_h \in V_h.
\end{equation}
We derive a function $\sigma_{n+1} \in L^1(\Omega; \mathbb{R}^d)$ with $-\divergence_h \sigma_{n+1} = f$ in $V_h^*$ by defining
\begin{equation}\label{eq:FctKacanov} 
\sigma_{n+1} \coloneqq \frac{\phi'(|\nabla_h u_{h,n}|)}{|\nabla_h u_{h,n}|} \nabla_h u_{h,n+1}. 
\end{equation}

Similarly, gradient descent methods such as in \cite{HuangLiLiu07} seek the descent direction $g_h \in V_h$ with 
\begin{equation*}
\int_\Omega \nabla_h g_h \cdot \nabla_h v_h \dx = \int_\Omega \frac{\phi'(|\nabla_h u_{h,n}|)}{|\nabla_h u_{h,n}|} \nabla_h u_{h,n} \cdot \nabla_h v_h \dx - \int_\Omega f v_h \dx\quad\text{for all } v_h \in V_h.
\end{equation*} 
This leads to a function $\sigma_{n+1} \in L^1(\Omega;\mathbb{R}^d)$ with $-\divergence_h \sigma_{n+1} = f$ in $V_h^*$ defined by 
\begin{equation*}
\sigma_{n+1} \coloneqq \frac{\phi'(|\nabla_h u_{h,n}|)}{|\nabla_h u_{h,n}|} \nabla_h u_{h,n} - \nabla_h g_h.
\end{equation*}

Furthermore, Newton schemes such as in \cite{LuoTeng16} seek $\delta_n \in V_h$ with
\begin{equation*}
\begin{aligned}
  \lefteqn{\int_\Omega \bigg(\phi''(|\nabla_h u_{h,n}|)-\frac{\phi'(\abs{\nabla_h u_{h,n}})}{\abs{\nabla_h u_{h,n}}}\bigg)\bigg(\frac{\nabla_h u_{h,n}}{|\nabla_h u_{h,n}|} \cdot \nabla_h \delta_n\bigg)\frac{\nabla_h u_{h,n}}{|\nabla_h u_{h,n}|} \cdot \nabla_h v_h\dx}
  \\
  &+ \int_\Omega \frac{\phi'(|\nabla_h u_{h,n}|)}{|\nabla_h u_{h,n}|} \nabla_h \delta_n \cdot \nabla_h v_h\dx\\
&\qquad \qquad = - \int_\Omega \frac{\phi'(|\nabla_h u_{h,n}|)}{|\nabla_h u_{h,n}|} \nabla_h u_{h,n} \cdot \nabla_h v_h\dx + \int_\Omega f v_h\dx\qquad\text{for all }v_h \in V_h.
\end{aligned}
\end{equation*}
This leads to a function $\sigma_{n+1} \in L^1(\Omega;\mathbb{R}^d)$ with $-\divergence_h \sigma_{n+1} = f$ in $V_h^*$ defined by 
\begin{equation*}
\begin{aligned}
\sigma_{n+1}&\coloneqq\bigg(\phi''(|\nabla_h u_{h,n}|)-\frac{\phi'(\abs{\nabla_h u_{h,n}})}{\abs{\nabla_h u_{h,n}}}\bigg)\bigg(\frac{\nabla_h u_{h,n}}{|\nabla_h u_{h,n}|} \cdot \nabla_h \delta_n\bigg)\frac{\nabla_h u_{h,n}}{|\nabla_h u_{h,n}|}\\
&\quad + \frac{\phi'(|\nabla_h u_{h,n}|)}{|\nabla_h u_{h,n}|} \nabla_h (u_{h,n}+\delta_n).
\end{aligned}
\end{equation*}
Thus, all of these iterative schemes have the built-in iteration error estimator
\begin{equation*}
\mathcal{J}(u_{h,n+1}) - \mathcal{J}(u_h) \leq \mathcal{J}(u_{h,n+1}) + \mathcal{J}^*(\sigma_{n+1}).
\end{equation*}
This property has already been used in the adaptive scheme in \cite{BalciDieningStorn23} but extends to a much larger class of problems and solvers. Remark~\ref{rem:DiscEquili} below discusses the derivation of functions $\sigma_{n+1}$ if we exploit inexact solvers for the linearized problems.
\begin{remark}[Equilibration]
Let $\mathcal{L}^1_{1,0}(\tria) \subset V_h$.
The equilibration techniques introduced in \cite{BraessSchoeberl08} and generalized in \cite{ErnVohralik15} allow us to compute for any $\tau \in L^1(\Omega;\mathbb{R}^d)$ with $-\divergence_h \tau = f$ in $V_h^*$  functions $\mathcal{E}\tau \in \mathbb{P}_{k+1}(\tria;\mathbb{R}^d)$ with divergence $-\divergence \mathcal{E}\tau = f$ for right-hand sides $f\in \mathbb{P}_k(\tria)$. These functions can be used to compute a guaranteed upper bound for the distance to the exact continuous minimal energy; that is, 
\begin{equation*}
\mathcal{J}(v) - \min_{w \in V} \mathcal{J}(w) \leq \mathcal{J}(v)  + \mathcal{J}^*(\mathcal{E}\tau)\qquad\text{for all }v \in V.
\end{equation*}
\end{remark}
\begin{remark}[Discrete equilibration]\label{rem:DiscEquili}
The paper \cite{PapezRuedeVohralikWohlmuth20} introduces a multi-level approach to design a function  $\sigma_h \in L^2(\Omega;\mathbb{R}^d)$ with $-\divergence \chi_h = r_h$ for some piecewise polynomial $r_h \in \mathbb{P}_k(\tria)$ of maximal degree $k\in \mathbb{N}$ satisfying for a given weight $A\colon \Omega \to \mathbb{R}$ and a discrete function $v_h \in V_h$ 
\begin{align*}
\int_\Omega r_h  w_h\dx = \int_\Omega A \nabla_h v_h\cdot \nabla_h w_h \dx - \int_\Omega fw_h \dx \qquad\text{for all }w_h \in V_h.
\end{align*}
It is used in \cite[Thm.~7.1]{PapezRuedeVohralikWohlmuth20} to bound the iteration error in linear solvers. More precisely, they show that for $A = I$ the distance to $u_h \in V_h$ with 
\begin{align}\label{eq:Laplace}
\int_\Omega \nabla_h u_h\cdot \nabla_h w_h\dx = \int_\Omega f w_h\dx\qquad\text{for all }w_h \in V_h
\end{align}
 is bounded by 
\begin{align}\label{eq:asdsadssadasdsada}
\begin{aligned}
\lVert \nabla_h (u_h - v_h)\rVert_{L^2(\Omega)} &= \sup_{w_h \in V_h\setminus \lbrace 0 \rbrace} \frac{\int_\Omega \nabla_h (u_h - v_h)\cdot \nabla_h w_h\dx}{\lVert \nabla_h w_h \rVert_{L^2(\Omega)}}  \\
 &  = \sup_{w_h \in V_h\setminus \lbrace 0 \rbrace} \frac{\int_\Omega \chi_h\cdot \nabla_h w_h\dx}{\lVert \nabla_h w_h \rVert_{L^2(\Omega)}}
 \leq \lVert \chi_h \rVert_{L^2(\Omega)}.
\end{aligned}
\end{align}
Since the function $\sigma_h \coloneqq A \nabla_h v_h - \chi_h$ satisfies $-\divergence \sigma_h = f$ in $V_h^*$, we can exploit Theorem~\ref{thm:discrPrimalDual} to obtain even in the nonlinear case the guaranteed upper bound
\begin{align*}
\mathcal{J}(v_h) - \min_{w_h \in V_h} \mathcal{J}(w_h) \leq \mathcal{J}(v_h) + \mathcal{J}^*(\sigma_h).
\end{align*}
A straightforward calculation shows that for the Laplace problem in \eqref{eq:Laplace} we recover the same bound as in \eqref{eq:asdsadssadasdsada} in the sense that 
\begin{align*}
\frac{1}{2} \lVert \nabla_h (u_h - v_h) \rVert^2_{L^2(\Omega)} = \mathcal{J}(v_h) - \mathcal{J}(u_h) \leq 
\mathcal{J}(v_h) + \mathcal{J}^*(\sigma_h) = \frac{1}{2}\lVert \chi_h \rVert_{L^2(\Omega)}^2. 
\end{align*}
\end{remark}
\section{Dual \Kacanov{} scheme}\label{sec:dualKacanov}
Another important application of the discrete duality result in Theorem~\ref{thm:discrPrimalDual} are dual \Kacanov{} schemes introduced for the $p$-Laplacian with $p > 2$ in \cite{BalciDieningStorn23}.
This ansatz is motivated by the fact the convergence results for the \Kacanov{} scheme  in \cite{Zeidler90,HanJensenShimansky97,GarauMorinZuppa11,HeidWihler20} require among others the property 
\begin{equation*}
 \phi''(t) \leq \frac{\phi'(t)}{t}	\qquad \text{for all }t\geq 0.
\end{equation*} 
In particular, these results do not apply to integrands with 
\begin{equation}\label{eq:OtherCase}
 \phi''(t) \geq \frac{\phi'(t)}{t}	\qquad \text{for all }t\geq 0.
\end{equation}
However, assuming that $\phi'$ is strictly increasing and differentiable, we apply Proposition~\ref{prop:Conjugate} to conclude $(\phi^*)' = (\phi')^{-1}$. This yields 
\begin{equation*}
1 = \frac{d}{ds} s = \frac{d}{ds}  \phi'\big((\phi^*)'(s)\big) = \phi''\big((\phi^*)'(s)\big) \, (\phi^*)''(s)
\end{equation*}
and we obtain from \eqref{eq:OtherCase} for $t = (\phi^*)'(s)$ with $s \geq 0$ the inequality
\begin{equation*}
\phi''((\phi^*)'(s)) \geq \frac{s}{(\phi^*)'(s)}\qquad \text{implying} \qquad (\phi^*)''(s) \leq \frac{(\phi^*)'(s)}{s}.
\end{equation*} 
Hence, the dual function satisfies the required property, which extends existing convergence results to the case \eqref{eq:OtherCase} if one considers the dual problem.

To discretize the dual problem, we suggest the following. 
Suppose that $V_h \subset \mathbb{P}_1(\tria)$ consists of piece-wise affine functions such as lowest-order Lagrange or Crouzeix--Raviart elements.
Corollary~\ref{cor:Duality} states the discrete duality relation
\begin{equation*}
\min_{v_h\in V_h} \mathcal{J}(v_h) = -  \min_{\substack{\tau \in \mathbb{P}_0(\tria)^d\\ -\divergence_h \tau = f \text{ in }V_h^*}} \mathcal{J}^*(\tau).
\end{equation*}
Hence, the dual \Kacanov{} scheme computes instead of the minimizer to the discretized primal problem the minimizer of the discrete dual problem via the following iteration: Given $\sigma_{h,n} \in \mathbb{P}_0(\tria)^d$, compute $\sigma_{h,n+1} \in  \mathbb{P}_0(\tria)^d$ and $u_{h,n+1} \in V_h$ that satisfy for all $\tau_h \in \mathbb{P}_0(\tria)^d$ and $v_h\in V_h$
\begin{equation}\label{eq:DualKacSaddlePoint}
\begin{aligned}
\int_\Omega \frac{(\phi^*)'(|\sigma_{h,n}|)}{|\sigma_{h,n}|} \sigma_{h,n+1} \cdot \tau_h \dx + \int_\Omega \nabla_h u_{h,n+1} \cdot \tau_h\dx &= 0 ,\\
\int_\Omega \sigma_{h,n+1} \cdot \nabla_h v_h\dx &= \int_\Omega fv_h\dx.
\end{aligned}
\end{equation}
The first line in the saddle point problem \eqref{eq:DualKacSaddlePoint} yields the point-wise identity 
\begin{equation*}
\frac{(\phi^*)'(|\sigma_{h,n}|)}{|\sigma_{h,n}|} \sigma_{h,n+1} = \nabla_h u_{h,n+1}.
\end{equation*}
Inserting this identity into the second equation of \eqref{eq:DualKacSaddlePoint} yields the following equivalent formulation of the dual \Kacanov{} scheme.
\begin{definition}[Dual \Kacanov{} iteration]\label{def:DualKacanov}
Given $\sigma_{h,n}\in \mathbb{P}_0(\tria)^d$, seek $u_{h,n+1}\in V_h$ with 
\begin{equation*}
\int_\Omega \frac{|\sigma_{h,n}|}{(\phi^*)'(|\sigma_{h,n}|)} \nabla_h u_{h,n+1} \cdot \nabla_h v_h\dx = \int_\Omega fv_h\dx\qquad\text{for all }v_h \in V_h.
\end{equation*}
Then update the weight 
\begin{equation}\label{eq:weightDualKac}
\sigma_{h,n+1} \coloneqq \frac{|\sigma_{h,n}|}{(\phi^*)'(|\sigma_{h,n}|)} \nabla_h u_{h,n+1}.
\end{equation}
\end{definition}
\begin{remark}[Higher-order]
The idea of the dual \Kacanov{} scheme extends to higher-order discretizations such as $V_h = \mathcal{L}^1_{2,0}(\tria)$. However, for general integrands $\phi$ and $\nabla_h V_h \not\subset \mathbb{P}_0(\tria)^d$ the weight $\sigma_{h,n+1}$ in \eqref{eq:weightDualKac} is not a piece-wise polynomial function, leading to practical difficulties.
\end{remark}

\begin{remark}[Gradient flow]
An alternative to \Kacanov{} schemes is the use of gradient flows. Bartels suggests in \cite[Sec.5]{Bartels21} the application of duality and the Marini identity (see Remark~\ref{rem:Marini}) to extend convergence results from \cite{BartelsDieningNochetto18} for the gradient flow of the $p$-Laplacian to the previously uncovered case $p>2$ for the Crouzeix--Raviart FEM.
The discrete duality result shows that this idea is not restricted to the Crouzeix--Raviart FEM but can also be extended to (lowest-order) Lagrange elements.
\end{remark}
\section{Numerical experiments}\label{sec:numExp}
In this section, we illustrate the performance of the discrete primal-dual estimator in Theorem~\ref{thm:discrPrimalDual} and the dual \Kacanov{} scheme in Definition~\ref{def:DualKacanov} numerically.
\subsection{$p$-Laplace problem}\label{subsec:pLap}
In our first experiment we solve the $p$-Laplace problem on the L-shaped domain $\Omega = (-1,1)^2\setminus [0,1)^2$ with homogeneous Dirichlet boundary condition. We discretize the space $W^{1,p}_0(\Omega)$ by lowest-order Lagrange and Crouzeix--Raviart elements, cf.~Section~\ref{sec:DiscDuality}. The underlying triangulation $\tria$ consists of $\#\tria \approx 400\, 000$ triangles and resulted from the AFEM loop for the Laplace problem with constant right-hand side $f = 2$ discretized by lowest-order Lagrange elements, cf.~\cite{CarstensenFeischlPagePraetorius14}. Hence, the mesh is graded towards the re-entrant corner. The right-hand side $f = 2$ in \eqref{eq:Energy} equals two and the integrand reads with constants $\kappa \coloneqq 10^{-1}$ and $p\in (1,\infty)$
\begin{equation*}
\phi(t) \coloneqq \int_0^t \phi'(s)\ds \quad \text{ with }\quad \phi'(s) \coloneqq s\, (\kappa + s)^{p-2}.
\end{equation*}
Our initial iterates equal $u_{h,0} \coloneqq 0$ and $\sigma_{h,0} \coloneqq 0$. We solve the problem with the \Kacanov{} scheme \eqref{eq:DefKacanov} for $p \leq 2$ and the dual \Kacanov{} scheme in Definition~\ref{def:DualKacanov} for $p> 2$. The convergence of the first approach  has been studied for $p\leq 2$ in \cite{DieningFornasierTomasiWank20}, the convergence of the latter for $p\geq 2$ in \cite{BalciDieningStorn23}. Notice that the use of the dual \Kacanov{} scheme is indeed needed for the cases $p>2$, since the primal \Kacanov{} scheme does not converge, as indicated by our numerical experiments (not displayed in this paper) and discussed for $\kappa = 0$ in~\cite[Rem.~21]{DieningFornasierTomasiWank20}. 

Since in our computations the energies obtained by the Crouzeix--Raviart FEM were very close to the ones obtained by the Lagrange FEM, we solely display the results of the latter in Figure~\ref{fig:Exp1}. Let $u_h \in \mathcal{L}^1_{1,0}(\tria)$ denote the exact discrete minimizer. The convergence history plot in Figure~\ref{fig:Exp1} indicates that for $p<2$ the primal and dual energy differences converge with the same speed, but the dual energy is significantly smaller. This yields an efficiency index 
\begin{equation}\label{eq:efficiencyIndex}
\frac{\mathcal{J}^*(\sigma_{h,n}) + \mathcal{J}(u_{h,n})}{\mathcal{J}(u_{h,n}) - \mathcal{J}(u_h)} = 1 + \frac{\mathcal{J}^*(\sigma_{h,n}) + \mathcal{J}(u_h)}{\mathcal{J}(u_{h,n}) - \mathcal{J}(u_h)}
\end{equation}
close to one. For $p>2$ the situation seems to be different. 
The dual energy error seems to converge with a slower rate than the primal energy error, leading to an increasing efficiency index. Nevertheless, both \Kacanov{} schemes result in fast linear convergence. 
\begin{figure}[ht!]
\begin{tikzpicture}
\begin{axis}[
clip=false,
width=.5\textwidth,
height=.45\textwidth,
ymode = log,
cycle multi list={\nextlist MyCol1},
scale = {1},
clip = true,
legend cell align=left,
legend style={legend columns=1,legend pos= south west,font=\fontsize{7}{5}\selectfont}
]
	\addplot table [x=Iter,y=EnergyError] {experiments/Experiment1_nrElem388485.dat};
	\addplot table [x=Iter,y=DualEnergyError] {experiments/Experiment1_nrElem388485.dat};
	\addplot table [x=Iter,y=EfficiencyIndex] {experiments/Experiment1_nrElem388485.dat};
	\legend{{$\mathcal{J}(u_{h,n}) - \mathcal{J}(u_h)$},{$\mathcal{J}^*(\sigma_{h,n}) + \mathcal{J}(u_h)$},{Efficiency index}};
\end{axis}
\end{tikzpicture}
\begin{tikzpicture}
\begin{axis}[
clip=false,
width=.5\textwidth,
height=.45\textwidth,
ymode = log,
ymax = 50,
cycle multi list={\nextlist MyCol1},
scale = {1},
clip = true,
legend cell align=left,
legend style={legend columns=1,legend pos= south west,font=\fontsize{7}{5}\selectfont}
]
	\addplot table [x=Iter,y=EnergyError] {experiments/Experiment1b_nrElem388485.dat};
	\addplot table [x=Iter,y=DualEnergyError] {experiments/Experiment1b_nrElem388485.dat};
	\addplot table [x=Iter,y=EfficiencyIndex] {experiments/Experiment1b_nrElem388485.dat};
	\legend{{$\mathcal{J}(u_{h,n}) - \mathcal{J}(u_h)$},{$\mathcal{J}^*(\sigma_{h,n}) + \mathcal{J}(u_h)$},{Efficiency index}};
\end{axis}
\end{tikzpicture}
\caption{Convergence history of the energy differences and the efficiency index \eqref{eq:efficiencyIndex} plotted against the number of iterations $n$ for the $p$-Laplacian with $p=3/2$ (left) and $p=4$ (right).} \label{fig:Exp1}
\end{figure}
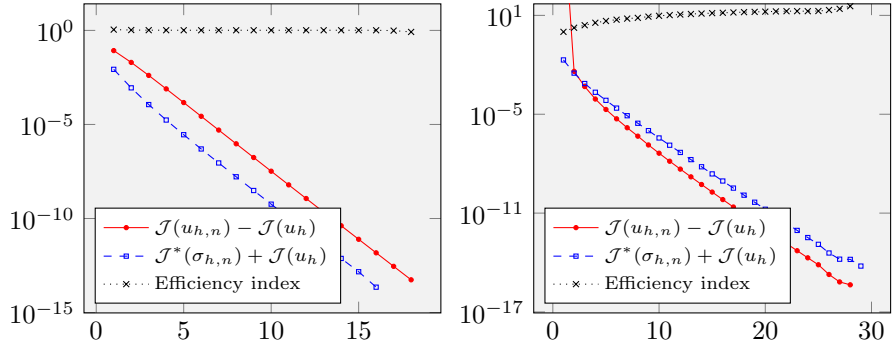%

%
%
%

\subsection{Optimal design problem}\label{subsec:ShapeOpti}
Our second experiment solves the optimal design problem for maximal torsion stiffness of an infinite bar of given geometry and unknown distribution of two materials of prescribed amounts. The model and its numerical approximation is investigated for example in~\cite{GoodmanKohnReyna86,KawohlStaraWittum91,BartelsCarstensen08,CarstensenTran21}. With parameters
\begin{equation*}
\lambda \coloneqq 0.0145,\quad \mu_1 \coloneqq 1,\quad \mu_2 \coloneqq 2,\quad t_1 \coloneqq \sqrt{2\lambda\mu_1/\mu_2},\quad t_2 \coloneqq \mu_2\lambda/\mu_1 
\end{equation*}
the derivative of the integrand $\phi$ in \eqref{eq:Energy} reads for all $s\geq 0$
\begin{equation*}
\phi'(s) = \begin{cases}
\mu_2 s&\text{for }s\leq t_1,\\
\mu_2t_1 = t_2 \mu_1 &\text{for }t_1 < s \leq t_2,\\
\mu_1 s &\text{for }t_2 < s.
\end{cases}
\end{equation*}
We have a constant right-hand side $f=1$ and we include homogeneous Dirichlet boundary conditions. We use the same domain and graded mesh as in the experiment of Section~\ref{subsec:pLap}. The problem is solved by the \Kacanov{} scheme in \eqref{eq:DefKacanov} with initial data $u_{h,0} = 0$. The exact discrete minimizer is denoted by $u_h \in V_h$.

Again, the results for the Lagrange and Crouzeix--Raviart FEM are similar and thus we display in Figure~\ref{fig:Exp2} solely the results of the Lagrange FEM. The convergence history plot in Figure~\ref{fig:Exp2} illustrates a fast convergence of the \Kacanov{} scheme in a pre-asymptotic regime. After this pre-asymptotic phase, the iterative scheme seems to converge linearly with some moderate speed. Thereby, the dual energy converges slower than the primal energy. This causes the growth of the efficiency index.

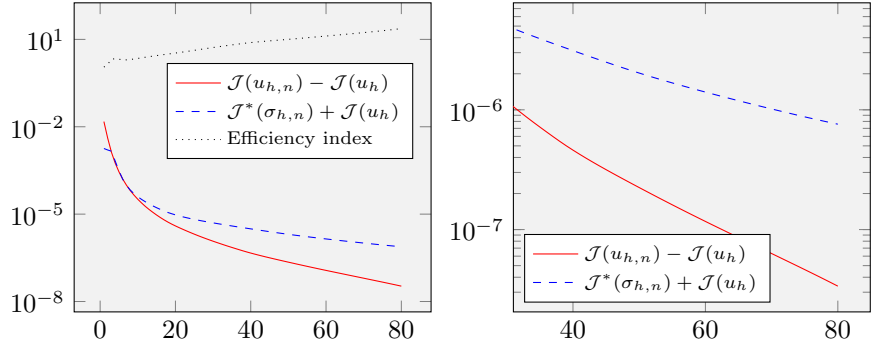
\begin{figure}[ht!]
\begin{tikzpicture}
\begin{axis}[
clip=false,
width=.5\textwidth,
height=.45\textwidth,
ymode = log,
cycle multi list={\nextlist MyCol1b},
scale = {1},
clip = true,
legend cell align=left,
legend style={legend columns=1,at={(.95,0.65)},anchor = east,font=\fontsize{7}{5}\selectfont}
]
	\addplot table [x=Iter,y=EnergyError] {experiments/Experiment4_nrElem388485.dat};
	\addplot table [x=Iter,y=DualEnergyError] {experiments/Experiment4_nrElem388485.dat};
	\addplot table [x=Iter,y=EfficiencyIndex] {experiments/Experiment4_nrElem388485.dat};	
	\legend{{$\mathcal{J}(u_{h,n}) - \mathcal{J}(u_h)$},{$\mathcal{J}^*(\sigma_{h,n}) + \mathcal{J}(u_h)$},{Efficiency index}};
\end{axis}
\end{tikzpicture}
\begin{tikzpicture}
\begin{axis}[
clip=false,
width=.5\textwidth,
height=.45\textwidth,
ymode = log,
cycle multi list={\nextlist MyCol1b},
scale = {1},
xmin = 31,
clip = true,
legend cell align=left,
legend style={legend columns=1,legend pos= south west,font=\fontsize{7}{5}\selectfont}
]
	\addplot table [x=Iter,y=EnergyError] {experiments/Experiment4_nrElem388485.dat};
	\addplot table [x=Iter,y=DualEnergyError] {experiments/Experiment4_nrElem388485.dat};
	\legend{{$\mathcal{J}(u_{h,n}) - \mathcal{J}(u_h)$},{$\mathcal{J}^*(\sigma_{h,n}) + \mathcal{J}(u_h)$}};
\end{axis}
\end{tikzpicture}
\caption{Convergence history of the energy differences and the efficiency index \eqref{eq:efficiencyIndex} plotted against the number of iterations $n$ for the optimal design problem with (left) and without (right) iterations from 1 to 30.} \label{fig:Exp2}
\end{figure}%

\subsection{$p$-Stokes problem}\label{subsec:pStokes}
The $p$-Stokes problem involves the integrand $\phi$ from Section~\ref{subsec:pLap} and minimizes with symmetric gradient $\varepsilon(\bigcdot) \coloneqq 1/2\, (\nabla \bigcdot + (\nabla \bigcdot)^\top)$ the energy
\begin{equation*}
\mathcal{J}(v) \coloneqq \int_\Omega \phi\big(|\varepsilon(v+ v_\Gamma)|\big) - fv\dx
\end{equation*} 
over all $v\in Z \coloneqq \lbrace w \in W^{1,p}_0(\Omega;\mathbb{R}^d)\colon \divergence w = 0\rbrace$, where the right-hand side $f=0$ is set to zero and the function $v_\Gamma \in W^{1,p}(\Omega;\mathbb{R}^d)$ satisfies $\divergence v_\Gamma = 0$. The domain reads $\Omega \coloneqq \big((-2, 8) \times  (-1, 1)) \setminus ([-2, 0] \times [-1, 0]\big)$ and the boundary values of $v_\Gamma$ are defined for all $(x,y) \in \partial \Omega$ as
\begin{equation}\label{eq:BddCondStokes}
v_\Gamma(x,y) \coloneqq \begin{cases}
 \big((1-y)y/10,0\big)&\text{for }x=-2,\\
\big(0,0\big)&\text{for }-2<x<8,\\
\big((1+y)(1-y)/80,0\big)  &\text{for }x = 8.
\end{cases}
\end{equation} 
Let $A : B$ denote the Frobenius inner product for matrices $A,B \in \mathbb{R}^{d\times d}$.
Following the arguments of Theorem~\ref{thm:Duality}, the dual problem minimizes  the energy
\begin{equation*}
\mathcal{J}^*(\tau) \coloneqq \int_\Omega \varphi^*(|\tau|) - \tau : \varepsilon (v_\Gamma) \dx
\end{equation*}
over all $\tau \in L^1(\Omega;\mathbb{R}^{d\times d})$ with $ \int_\Omega \tau:\varepsilon(v)\dx = 0$ for all $v \in Z$.
We discretize the space $Z$ by the Kouhia--Stenberg finite element, see \cite{KouhiaStenberg95,CarstensenSchedensack15,HuSchedensack19}, that is, 
\begin{equation*}
Z_h \coloneqq \Big\lbrace v_h \in \mathcal{L}^1_{1,0}(\tria) \times \CRo \colon \sum_{T\in \tria} \int_T p_h \, \divergence v_h\dx = 0\text{ for all }p_h\in \mathbb{P}_0(\tria) \Big\rbrace. 
\end{equation*} 
Moreover, we replace the symmetric gradient by the broken symmetric gradient $\varepsilon_h(\bigcdot) \coloneqq 1/2\, (\nabla_h \bigcdot + (\nabla_h \bigcdot)^\top)$ and approximate the function $v_\Gamma$ by the discrete function $v_{\Gamma,h} \in \mathcal{L}^1_{1}(\tria) \times \CR$ which equals zero at its interior degrees of freedom and $v_\Gamma$ at its degrees in freedom on the boundary $\partial\Omega$.
The resulting discretized non-linear problem is solved by the \Kacanov{} scheme \eqref{eq:DefKacanov} for $p\leq 2$ and by the dual \Kacanov{} scheme in Definition~\ref{def:DualKacanov} for $p>2$ with initial values $u_{h,0} \coloneqq v_{\Gamma,h}$ and $\sigma_{h,0} \coloneqq 0$. The triangulation $\tria$ consists of $\# \tria \approx 100\, 000$ triangles and is graded towards the re-entrant corner.

Figure~\ref{fig:Exp3} displays the resulting convergence history and the efficiency index \eqref{eq:efficiencyIndex}. 
The observations are similar to the $p$-Laplace problem: For $p<2$ the dual energy is much smaller than the primal energy error, leading to an efficiency index close to one. For $p>2$ the dual energy error is larger than the primal energy error, leading to an efficiency index close to 10. In contrast to the $p$-Laplace problem, the convergence speed of the primal and dual energy appears to be equal.

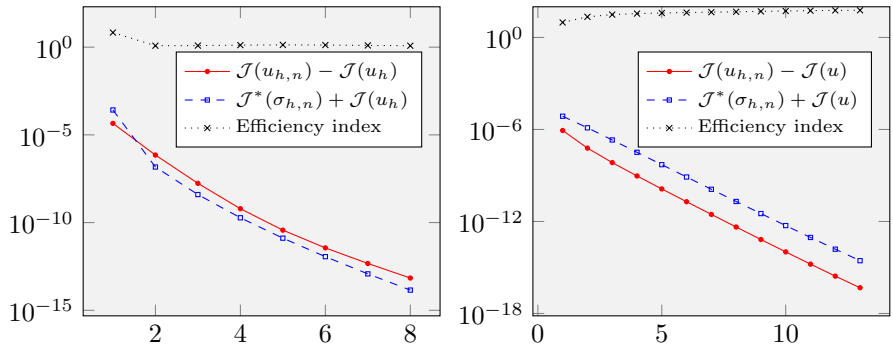
\begin{figure}[ht!]
\begin{tikzpicture}
\begin{axis}[
clip=false,
width=.5\textwidth,
height=.45\textwidth,
ymode = log,
cycle multi list={\nextlist MyCol1},
scale = {1},
clip = true,
legend cell align=left,
legend style={legend columns=1,at={(.95,0.7)},anchor = east,font=\fontsize{7}{5}\selectfont}
]
	\addplot table [x=Iter,y=EnergyError] {experiments/Experiment2_nrElem92890.dat};
	\addplot table [x=Iter,y=DualEnergyError] {experiments/Experiment2_nrElem92890.dat};
	\addplot table [x=Iter,y=EfficiencyIndex] {experiments/Experiment2_nrElem92890.dat};
	\legend{{$\mathcal{J}(u_{h,n}) - \mathcal{J}(u_h)$},{$\mathcal{J}^*(\sigma_{h,n}) + \mathcal{J}(u_h)$},{Efficiency index}};
\end{axis}
\end{tikzpicture}
\begin{tikzpicture}
\begin{axis}[
clip=false,
width=.5\textwidth,
height=.45\textwidth,
ymode = log,
ymax = 100,
cycle multi list={\nextlist MyCol1},
scale = {1},
clip = true,
legend cell align=left,
legend style={legend columns=1,at={(.95,0.7)},anchor = east,font=\fontsize{7}{5}\selectfont}
]
	\addplot table [x=Iter,y=EnergyError] {experiments/Experiment2b_nrElem92890.dat};
	\addplot table [x=Iter,y=DualEnergyError] {experiments/Experiment2b_nrElem92890.dat};
	\addplot table [x=Iter,y=EfficiencyIndex] {experiments/Experiment2b_nrElem92890.dat};
	\legend{{$\mathcal{J}(u_{h,n}) - \mathcal{J}(u)$},{$\mathcal{J}^*(\sigma_{h,n}) + \mathcal{J}(u)$},{Efficiency index}};
\end{axis}
\end{tikzpicture}
\caption{Convergence history of the energy differences and the efficiency index \eqref{eq:efficiencyIndex} plotted against the number of iterations $n$ for the $p$-Stokes problem with $p=3/2$ (left) and $p=4$ (right).} \label{fig:Exp3}
\end{figure}%

\subsection{Bingham fluid} \label{subsec:Bingham}
Our last experiment investigates the discrete primal dual error estimator for slowly flowing Bingham fluids. The function spaces $Z$ and $Z_h$, the domain $\Omega$, the boundary data $v_\Gamma$, and the right-hand side $f=0$ are the same as in Section~\ref{subsec:pStokes}. The integrand reads 
\begin{equation}\label{eq:PhiBingham}
\phi(t) \coloneqq \nu t^2 +  \sigma t \qquad \text{for all }t\geq 0.
\end{equation}
We set the yield stress to $\sigma \coloneqq 0.3$ and the fluid viscosity to $\nu\coloneqq 1$. Moreover, we set the regularized energy
\begin{equation*}
\phi_\varepsilon(t) \coloneqq \nu t^2 +  \sigma (\sqrt{ t^2 + \varepsilon^{2}} - \varepsilon) \qquad \text{for all }t\geq 0\text{ and } \varepsilon > 0.
\end{equation*}
As suggested in \cite{HeidSueli22} we solve the discretized problem by the primal \Kacanov{} scheme \eqref{eq:DefKacanov}, where the integrand $\phi$ is replaced by the regularized integrand $\phi_{1/n}$ with $n \in \mathbb{N}$ corresponding to the number of the current iteration in the \Kacanov{} scheme. Alternatively, we decrease the regularization parameter adaptively according to the strategy discussed in Remark~\ref{rem:AdaptReg}. More precisely, starting with $\varepsilon = 1$ we divide $\varepsilon$ by two if we have a ratio 
\begin{equation*}
\frac{\textup{GUB}}{\textup{GUB}_\varepsilon} \coloneq \frac{\mathcal{J}(u_{h,n}) + \mathcal{J}^*(\sigma_{h,n})}{\mathcal{J}_\varepsilon(u_{h,n}) + \mathcal{J}_\varepsilon^*(\sigma_{h,n})} > 100.
\end{equation*}
The convergence history in Figure~\ref{fig:Exp4} demonstrates that the adaptive selection of the regularization parameter $\varepsilon$ leads to significantly faster convergence compared to the fixed decrease strategy $\varepsilon \coloneqq 1/n$, highlighting the clear advantage of the adaptive approach.
\begin{figure}[ht!]
\begin{tikzpicture}
\begin{axis}[
clip=false,
width=.5\textwidth,
height=.45\textwidth,
ymode = log,
cycle multi list={\nextlist MyCol1b},
scale = {1},
clip = true,
legend cell align=left,
legend style={legend columns=1,at={(.95,0.4)},anchor = east,font=\fontsize{7}{5}\selectfont}
]
	\addplot table [x=Iter,y=Energy0Error] {experiments/Experiment3_nrElem92890.dat};
	\addplot table [x=Iter,y=DualEnergy0Error] {experiments/Experiment3_nrElem92890.dat};
	\legend{{$\mathcal{J}(u_{h,n}) - \mathcal{J}(u_h)$},{$\mathcal{J}^*(\sigma_{h,n}) + \mathcal{J}(u_h)$}};
\end{axis}
\end{tikzpicture}
\begin{tikzpicture}
\begin{axis}[
clip=false,
width=.5\textwidth,
height=.45\textwidth,
ymode = log,
cycle multi list={\nextlist MyCol1b},
scale = {1},
clip = true,
legend cell align=left,
legend style={legend columns=1,legend pos= north east,font=\fontsize{7}{5}\selectfont}
]
	\addplot table [x=Iter,y=Energy0Error] {experiments/Experiment3_adapt_nrElem92890.dat};
	\addplot table [x=Iter,y=DualEnergy0Error] {experiments/Experiment3_adapt_nrElem92890.dat};
		\addplot table [x=Iter,y=eps] {experiments/Experiment3_adapt_nrElem92890.dat};
	\legend{{$\mathcal{J}(u_{h,n}) - \mathcal{J}(u_h)$},{$\mathcal{J}^*(\sigma_{h,n}) + \mathcal{J}(u_h)$},{Regularization $\varepsilon $}};
\end{axis}
\end{tikzpicture}
\caption{Convergence history of the energy differences plotted against the number of iterations $n$ with regularization $\varepsilon \coloneqq 1/n$ (left) and adaptively decreasing regularization (right).} \label{fig:Exp4}
\end{figure}
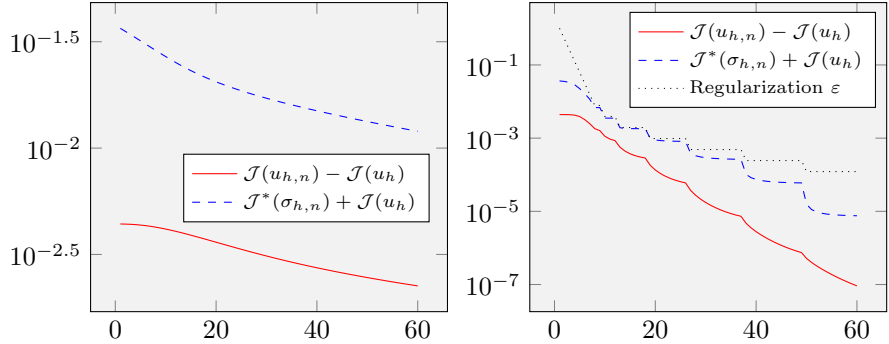%


\printbibliography
\end{document}